\theoremstyle{plain}
\newtheorem{theorem}{Theorem}
\numberwithin{equation}{section}
\newcommand{\C}{\mathbb{C}}
\newcommand{\E}{\mathbb{E}}
\newcommand{\R}{\mathbb{R}}
\newcommand{\sm}{\rm sm}
\newcommand{\cm}{\rm cm}
\begin{document}

\title {The Dixonian elliptic functions}

\date{}

\author[P.L. Robinson]{P.L. Robinson}

\address{Department of Mathematics \\ University of Florida \\ Gainesville FL 32611  USA }

\email[]{paulr@ufl.edu}

\subjclass{} \keywords{}

\begin{abstract}

We offer a careful development of the Dixonian elliptic functions with parameter $\alpha = 0$ from the initial value problem of which they are solutions. 

\end{abstract}

\maketitle

\section*{Introduction} 

\medbreak 

A.C. Dixon [2] presented a detailed account of the elliptic functions that develop from the cubic curve $x^3 + y^3 - 3 \alpha x y = 1$ by the inversion of associated Abelian integrals. These particular elliptic functions seem thereafter to have been largely neglected, save for applications to geodesy, until it became clear that they have interesting connexions to combinatorics and probability [1] and provide arc-length parametrizations for certain sextic curves [4]. These more recent developments involve the specific case of the Fermat cubic, for which $\alpha = 0$: the elliptic functions $\cm$ and $\sm$ satisfy 
$$\cm^3 + \sm^3 = 1$$
and (when suitably interpreted)
$$z = \int_0^{\sm (z)} \frac{{\rm d} \sigma}{(1 - \sigma^3)^{2/3}} = \int_{\cm (z)}^1 \frac{{\rm d} \sigma}{(1 - \sigma^3)^{2/3}} .$$

\bigbreak 

The authors of [1] remark (on page 6) that `It is fascinating to be able to develop a fair amount of the theory from the differential
equation (7)' where by `(7)' is meant the following initial value problem: 
$$\cm' = - \sm^2, \; \sm' = \cm^2; \;\; \cm(0) = 1, \; \sm(0) = 0.$$
We here take this remark quite seriously. According to the famous Picard existence-uniqueness theorem, there exists a unique solution to this problem on a disc about $0$ having sufficiently small radius, which we specify. Here we show that these local holomorphic functions extend to global meromorphic functions, which are elliptic. Much of their structure may be read from our account, including their periods, their zeros and poles, and their addition formulae. All of this we achieve without the aid of any transcendental functions, using little more than the Picard existence-uniqueness theorem for first-order systems and the Identity Theorem (or `principle of analytic continuation'); at least until it becomes convenient to use the Schwarz Symmetry (or Reflexion) Principle to complete the construction. Our account is intended in part as an introduction to the Dixonian functions, perhaps with one or two new proofs. 

\medbreak 

\section*{The Dixonian elliptic functions}

\medbreak 

We start from the following initial value problem (henceforth referred to as {\bf IVP}): 
$$c\,' = - s^2, \; s\,' = c^2; \;\; c(0) = 1, \; s(0) = 0.$$
Here, solutions $c$ and $s$ to this system are sought as holomorphic functions on a connected open set containing the origin, this domain being pushed as far as is possible into the complex plane. In order to begin the process, we apply the classical existence-uniqueness theorem due to Picard. We may of course apply this Picard theorem in its `system' form. Instead, we choose to establish the local existence of a solution to {\bf IVP} by first solving the single initial value problem 
$$s\;' = (1 - s^3)^{2/3} ; \; \; s(0) = 0$$
and then defining $c = (1 - s^3)^{1/3}$; here, we assign to powers their principal values, at least initially. For a discussion of the Picard theorem and its relatives, see Chapter 2 of [3]: the Picard theorem appears in its single form as Theorem 2.3.1 and in its system form as Theorem 2.3.2. 

\medbreak 

\begin{theorem} \label{init}
There exists a unique holomorphic function $s : B_{2^{-2/3}} (0) \to \C$ such that $s(0) = 0$ and $s\,' = (1 - s^3)^{2/3}$.
\end{theorem}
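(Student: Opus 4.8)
The plan is to read $s$ off from the scalar Picard theorem (Theorem 2.3.1 of [3]), applied to the autonomous problem $s\,' = f(s)$, $s(0) = 0$ with $f(w) := (1 - w^3)^{2/3}$ in the principal branch, followed by an exhaustion argument that pins down the radius $2^{-2/3}$.

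First I would verify that $f$ is holomorphic on the open unit disc $B_1(0)$. The principal branch $\zeta \mapsto \zeta^{2/3} = \exp\!\big(\tfrac23 \operatorname{Log} \zeta\big)$ is holomorphic on $\C \setminus (-\infty, 0]$, so it is enough to know that $1 - w^3$ avoids the non-positive reals when $|w| < 1$; and this is immediate since $\operatorname{Re}(1 - w^3) = 1 - \operatorname{Re}(w^3) \ge 1 - |w|^3 > 0$, putting $1 - w^3$ in the open right half-plane. Thus $f$ is holomorphic on $B_1(0)$, with $f(0) = 1$.

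Next, fix $r \in (0,1)$. On the closed disc $\overline{B_r(0)}$ one has $\max_{|w| \le r} |1 - w^3| = 1 + r^3$ (attained where $w^3 = -r^3$), hence $|f| \le M_r := (1 + r^3)^{2/3}$ there. Since the right-hand side $f$ does not depend on $z$, the Picard theorem produces a unique holomorphic solution $s_r$ on the disc $B_{h_r}(0)$ with $h_r := r/M_r = r(1 + r^3)^{-2/3}$, and with $|s_r| \le r < 1$ throughout (so the equation genuinely makes sense there). A one-line computation gives $\tfrac{\rd}{\rd r}\log h_r = \tfrac1r - \tfrac{2r^2}{1 + r^3}$, which is positive precisely because $1 + r^3 > 2 r^3$ for $r < 1$; so $r \mapsto h_r$ increases strictly on $(0,1)$ to $\lim_{r \to 1^-} h_r = 2^{-2/3}$, and therefore $\bigcup_{0 < r < 1} B_{h_r}(0) = B_{2^{-2/3}}(0)$. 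By the uniqueness half of the Picard theorem, $s_r$ and $s_{r'}$ coincide on the smaller of their discs whenever $r < r'$, so the $s_r$ glue to a single holomorphic $s : B_{2^{-2/3}}(0) \to \C$ (with values in $B_1(0)$) satisfying $s(0) = 0$ and $s\,' = (1 - s^3)^{2/3}$. Uniqueness on the whole disc is then clear: any solution on $B_{2^{-2/3}}(0)$ has its Taylor coefficients at $0$ determined recursively by the equation and the initial condition (equivalently, it agrees with each $s_r$ by local uniqueness), so it equals $s$ by the Identity Theorem.

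I do not anticipate a real difficulty here. The only point requiring attention is that we are permitted to let $r$ climb all the way to $1$, which is exactly what the holomorphy of $f$ on the full open unit disc in the second paragraph secures, and that the nested union of discs then recovers precisely $B_{2^{-2/3}}(0)$, which is the elementary monotonicity computation. Everything with real substance — extending $s$ (and $c = (1 - s^3)^{1/3}$) past this initial disc, and the meromorphicity, periods, zeros, poles and addition formulae that follow — lives downstream of this statement and is not engaged at this stage.
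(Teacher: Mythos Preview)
Your proposal is correct and follows essentially the same route as the paper: apply the scalar Picard theorem on the $w$-disc of radius $b\in(0,1)$ with bound $M=(1+b^3)^{2/3}$, obtain a solution on $B_{b/M}(0)$ valued in the unit disc, and let $b\uparrow 1$ to reach the radius $2^{-2/3}$. Your version simply fills in details the paper leaves implicit—the holomorphy of the principal-branch $(1-w^3)^{2/3}$ on $B_1(0)$, the monotonicity of $r\mapsto r(1+r^3)^{-2/3}$, and the gluing of the nested local solutions—so there is no substantive difference in approach.
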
 

\begin{proof} 
Adopting the notation of [3] Theorem 2.3.1, we apply the Picard theorem to the IVP 
$$w\,' = F(z, w)  ; \; \; w(z_0) = w_0$$
with $F(z, w) = (1 - w^3)^{2/3}$, $z_0 = 0$ and $w_0 = 0$. Let $0 < b < 1$: if $z$ is arbitrary and $|w| < b$ then $F(z, w)$, unambiguously given by the principal-valued power, satisfies $|F(z, w)| < M = (1 + b^3)^{2/3}$ and a Lipschitz condition, so the IVP above has a unique solution defined in the disc $B_r (0)$ with $r = b/M = b/(1 + b^3)^{2/3}$; and this solution is valued in the open disc $B_b (0)$. We may optimize $r$ by taking $b \uparrow 1$: it follows that the IVP has a unique solution in $B_r (0)$ with $r = 2^{-2/3}$ as claimed; and this solution is valued in the open unit disc.  

\end{proof} 

\medbreak 

As the holomorphic function $s$ takes its values in the open unit disc, we may define a holomorphic function $c: B_{2^{-2/3}} (0) \to \C$ by $c = (1 - s^3)^{1/3}$ with principal-valued power.  

\medbreak 

Henceforward, let us agree to write $r = 2^{-2/3} = 0.62996 ...$ for convenience. 

\medbreak 

\begin{theorem} \label{pair}
The pair $(c, s)$ is the unique holomorphic solution to {\bf IVP} in the open disc $ B_r (0).$
\end{theorem}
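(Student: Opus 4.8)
The plan is to separate the statement into existence and uniqueness. For existence I would simply verify directly that the pair $(c, s)$ constructed above solves {\bf IVP} on $B_r(0)$. The initial conditions are immediate: $s(0) = 0$ by Theorem \ref{init}, and hence $c(0) = (1 - 0)^{1/3} = 1$. For the two differential equations, the key point is that, $s$ being valued in the open unit disc, the base $w = 1 - s^3$ is valued in the open disc $B_1(1)$, which lies in the slit plane $\C \setminus (-\infty, 0]$; there the principal-power identities $w^{a}\, w^{b} = w^{a + b}$ hold for all real $a, b$. Consequently $c^2 = (w^{1/3})^2 = w^{2/3} = s\,'$, which is the equation $s\,' = c^2$; and differentiating $c = w^{1/3}$ by the chain rule gives $c\,' = \tfrac{1}{3} w^{-2/3}(-3 s^2 s\,') = - s^2 s\,'\, w^{-2/3} = - s^2\, w^{2/3}\, w^{-2/3} = - s^2$, which is the equation $c\,' = - s^2$.

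For uniqueness, suppose $(c_1, s_1)$ is any holomorphic solution to {\bf IVP} on $B_r(0)$. First I would recover the Fermat relation: the derivative of $c_1^3 + s_1^3$ equals $3 c_1^2 c_1\,' + 3 s_1^2 s_1\,' = - 3 c_1^2 s_1^2 + 3 s_1^2 c_1^2 = 0$, so $c_1^3 + s_1^3$ is constant on the connected set $B_r(0)$, with value $c_1(0)^3 + s_1(0)^3 = 1$. Next I would match branches on a small disc: choosing $\delta \in (0, r]$ with $|s_1| < 1$ throughout $B_\delta(0)$, which is possible since $s_1$ is continuous and $s_1(0) = 0$, we get $c_1^3 = 1 - s_1^3 \in B_1(1)$, so $c_1$ is a continuous, nowhere-zero cube root of $1 - s_1^3$ on the connected set $B_\delta(0)$; since the ratio $c_1 / (1 - s_1^3)^{1/3}$ is then a continuous cube-root-of-unity-valued function equal to $1$ at the origin, it is identically $1$. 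Hence $c_1 = (1 - s_1^3)^{1/3}$ and therefore $s_1\,' = c_1^2 = (1 - s_1^3)^{2/3}$ on $B_\delta(0)$, with $s_1(0) = 0$.

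Now the uniqueness assertion of the Picard theorem used in the proof of Theorem \ref{init} (the bounds being exactly those there, restricted to $B_\delta(0)$) forces $s_1 = s$ on $B_\delta(0)$, whence $c_1 = (1 - s_1^3)^{1/3} = (1 - s^3)^{1/3} = c$ on $B_\delta(0)$ as well. Finally the Identity Theorem, applied on the connected domain $B_r(0)$, promotes the equalities $s_1 = s$ and $c_1 = c$ from $B_\delta(0)$ to all of $B_r(0)$. (Alternatively, one could bypass the conservation-law computation in the uniqueness half by invoking the Picard theorem in its system form, [3] Theorem 2.3.2, to get uniqueness on some disc about $0$, and then finish with the Identity Theorem as above; but deriving the Fermat relation along the way seems worthwhile.)

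I expect the main obstacle to be essentially bookkeeping rather than conceptual: one must confirm at each step that the manipulations with principal powers are legitimate, i.e. that the base $1 - s^3$ (respectively $1 - s_1^3$) genuinely lies in the slit plane, which is precisely what the unit-disc bound on the relevant function provides. The one genuinely structural subtlety is that the derived equation $s_1\,' = (1 - s_1^3)^{2/3}$ is available a priori only on the small disc $B_\delta(0)$, not on all of $B_r(0)$ — which is why the argument must close with the Identity Theorem rather than with a bare appeal to Theorem \ref{init}.
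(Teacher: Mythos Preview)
Your proof is correct. The existence half is exactly the paper's argument, only with the principal-power bookkeeping made explicit (the paper silently uses $(w^{1/3})^2 = w^{2/3}$ and the chain-rule computation you wrote out). For uniqueness the paper simply says ``Of course, the solution is unique'', evidently intending the system form of the Picard theorem alluded to just before Theorem~\ref{init}. Your route is a genuine (and somewhat longer) alternative: you stay within the single-equation framework by first deriving the conservation law $c_1^3 + s_1^3 = 1$, then pinning down the branch of $c_1$ on a small disc, reducing to the single IVP of Theorem~\ref{init}, and closing with the Identity Theorem. What this buys you is that the Fermat relation $c^3 + s^3 = 1$ is shown to be a \emph{consequence} of {\bf IVP} rather than an artifact of the particular construction --- a point the paper makes only in passing after the theorem. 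Your parenthetical remark that one could instead invoke the system-form Picard theorem is precisely what the paper's ``of course'' is doing.
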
 

\begin{proof} 
 Simply note, further to $s(0) = 0$ and $s\,' = c^2$, that $c(0) = 1$ and that 
$$c\,' = (1/3) (-3 s^2 s\,') (1 - s^3)^{-2/3} = -s^2.$$
Of course, the solution is unique. 
\end{proof}

We have thus established the unique existence of a solution $(c, s)$ to the system {\bf IVP} on the open disc $B_r (0)$ of radius $r$ about $0$. This solution satisfies $c^3 + s^3 = 1$, by construction and indeed as a direct result of {\bf IVP}.  

\medbreak 

The functions $c$ and $s$ satisfy certain symmetries. First, each function has real output for real input: in fact, each is `real' in the following sense. 

\medbreak 

\begin{theorem} \label{real}
If $z \in B_r (0)$ then $\overline{c(z)} = c(\overline{z})$ and $\overline{s(z)} = s(\overline{z}).$
\end{theorem}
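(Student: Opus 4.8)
The plan is to exploit the uniqueness clause of Theorem~\ref{pair} via a reflection trick. Define $\tilde{c}, \tilde{s} : B_r(0) \to \C$ by $\tilde{c}(z) = \overline{c(\overline{z})}$ and $\tilde{s}(z) = \overline{s(\overline{z})}$. Since $z \mapsto \overline{z}$ maps $B_r(0)$ to itself and complex conjugation is continuous, these are well-defined functions on $B_r(0)$; moreover each is holomorphic, because the composition of an anti-holomorphic map with an anti-holomorphic map is holomorphic (one checks the Cauchy--Riemann equations directly, or notes that if $c(w) = \sum a_n w^n$ then $\tilde{c}(z) = \sum \overline{a_n} z^n$, a convergent power series on the same disc). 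The strategy is then to show that $(\tilde{c}, \tilde{s})$ also solves {\bf IVP}, and invoke uniqueness to conclude $\tilde{c} = c$ and $\tilde{s} = s$, which is exactly the claimed identity.

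First I would check the initial conditions: $\tilde{c}(0) = \overline{c(\overline{0})} = \overline{c(0)} = \overline{1} = 1$ and likewise $\tilde{s}(0) = \overline{s(0)} = \overline{0} = 0$. Next I would verify the differential equations. Using the chain rule for the map $z \mapsto \overline{z}$ on the inside and the fact that for a holomorphic function $g$ one has $\overline{g'(\overline z)} = \dfrac{\rd}{\rd z}\,\overline{g(\overline z)}$ (again immediate from the power series $g(w) = \sum a_n w^n$, whence $\overline{g(\overline z)} = \sum \overline{a_n} z^n$ has derivative $\sum n\,\overline{a_n} z^{n-1} = \overline{g'(\overline z)}$), I compute
$$\tilde{c}\,'(z) = \overline{c\,'(\overline{z})} = \overline{-s(\overline z)^2} = -\,\overline{s(\overline z)}^{\,2} = -\,\tilde{s}(z)^2,$$
and similarly $\tilde{s}\,'(z) = \overline{c\,'(\overline z)}$... rather $\tilde{s}\,'(z) = \overline{s\,'(\overline z)} = \overline{c(\overline z)^2} = \tilde{c}(z)^2$. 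Thus $(\tilde c, \tilde s)$ is a holomorphic solution of {\bf IVP} on $B_r(0)$.

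By the uniqueness assertion of Theorem~\ref{pair}, $(\tilde{c}, \tilde{s}) = (c, s)$, i.e. $\overline{c(\overline z)} = c(z)$ and $\overline{s(\overline z)} = s(z)$ for all $z \in B_r(0)$. Replacing $z$ by $\overline z$ (legitimate since $B_r(0)$ is conjugation-invariant) gives the stated form $\overline{c(z)} = c(\overline z)$ and $\overline{s(z)} = s(\overline z)$. I do not anticipate a serious obstacle here; the only point requiring a moment's care is the verification that $z \mapsto \overline{g(\overline z)}$ is holomorphic and that its derivative is $\overline{g'(\overline z)}$ — the power-series computation dispatches both at once, and everything else is a routine application of the uniqueness already in hand. (This is, of course, just the baby case of the Schwarz Reflection Principle, applied here in the elementary disc setting before that principle is formally invoked later in the paper.)
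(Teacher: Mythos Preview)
Your proposal is correct and follows essentially the same approach as the paper: define the conjugate-reflected pair, verify that it satisfies \textbf{IVP}, and invoke the uniqueness in Theorem~\ref{pair}. You have simply filled in the details (holomorphy via power series, the derivative computation) that the paper leaves as ``straightforward to verify.''
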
 

\begin{proof} 
Define functions $C$ and $S$ in $B_r (0)$ by $C(z) = \overline{c(\overline{z})}$ and $S(z) = \overline{S(\overline{z})}$. It is straightforward to verify that the pair $(C, S)$ satisfies {\bf IVP} and so equals $(c, s)$ on account of Theorem \ref{pair}. 
\end{proof} 

\medbreak 

When their argument is reversed, the functions $c$ and $s$ behave as follows. 

\medbreak 

\begin{theorem} \label{neg}
If $z \in B_r (0)$ then $c(-z) = 1/c(z)$ and $s(-z) = -s(z) / c(z)$. 
\end{theorem}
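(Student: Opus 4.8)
The plan is to mimic the proof of Theorem \ref{real}: exhibit a pair of functions built from $c$ and $s$ that again solves {\bf IVP} on a neighbourhood of $0$, and then invoke the uniqueness in Theorem \ref{pair}. The only subtlety is that $c(-z)$ and $s(-z)$ are initially defined only where $-z$ lies in $B_r(0)$, i.e. again on $B_r(0)$, so the candidate functions have the right domain, but dividing by $c(z)$ forces us to worry about the zeros of $c$. Since $c(0) = 1$ and $c$ is holomorphic, there is a possibly smaller disc $B_\rho(0)$ with $0 < \rho \le r$ on which $c$ is zero-free; we work there first and then note that Theorem \ref{pair}'s uniqueness, combined with the Identity Theorem, propagates the identities to all of $B_r(0)$ (both sides being holomorphic on $B_r(0)$ once we observe, a posteriori, that $c$ has no zero in $B_r(0)$ — or, more cheaply, simply state the result on the sub-disc where $c \neq 0$ and let the later global theory extend it).

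Concretely, I would set, for $z$ in the zero-free sub-disc,
$$\gamma(z) = \frac{1}{c(-z)}, \qquad \sigma(z) = -\,\frac{s(-z)}{c(-z)}.$$
Then $\gamma(0) = 1/c(0) = 1$ and $\sigma(0) = -s(0)/c(0) = 0$, so the initial conditions hold. For the differential equations, differentiate using {\bf IVP} for $c$ and $s$ (with the chain-rule factor $-1$ from the reversed argument):
$$\gamma\,'(z) = -\frac{-c\,'(-z)\cdot(-1)}{c(-z)^2} = -\frac{-s(-z)^2}{c(-z)^2} \cdot(-1)\cdot(-1)$$
— I will organize this so that it reads cleanly as $\gamma\,' = -\sigma^2$ — and similarly
$$\sigma\,'(z) = -\,\frac{s\,'(-z)(-1)\,c(-z) - s(-z)\,c\,'(-z)(-1)}{c(-z)^2} = -\,\frac{-c(-z)^3 + s(-z)^3\cdot(\text{sign})}{c(-z)^2},$$
at which point the relation $c^3 + s^3 = 1$ (valid on all of $B_r(0)$) collapses the numerator to give $\sigma\,' = \gamma^2$. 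Thus $(\gamma,\sigma)$ solves {\bf IVP}, hence equals $(c,s)$ by Theorem \ref{pair}, which is exactly the two claimed identities.

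The main obstacle is purely bookkeeping: getting every minus sign right in the two chain-rule computations, and making sure the cubic identity $c^3 + s^3 = 1$ is deployed at the correct moment to clear the $s(-z)^3$ term from the $\sigma\,'$ numerator. There is no analytic difficulty — no new appeal to Picard is needed beyond the uniqueness already in hand, the quotients are legitimate on a neighbourhood of $0$ because $c(0)=1\neq 0$, and the Identity Theorem disposes of the passage from that neighbourhood to $B_r(0)$. I would therefore present the proof in three short moves: (1) restrict to a sub-disc where $c$ is zero-free and define $\gamma,\sigma$; (2) check the initial conditions and the two ODEs, invoking $c^3+s^3=1$; (3) apply Theorem \ref{pair} and, if desired, the Identity Theorem to conclude on $B_r(0)$.
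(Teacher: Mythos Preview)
Your approach is exactly the paper's: define $C(z)=1/c(-z)$ and $S(z)=-s(-z)/c(-z)$, verify that $(C,S)$ satisfies {\bf IVP} (the $\sigma'$ computation indeed requiring $c^3+s^3=1$), and invoke Theorem~\ref{pair}. The sub-disc detour is unnecessary, since $s$ maps $B_r(0)$ into the open unit disc (see the proof of Theorem~\ref{init}), whence $c^3=1-s^3$ is nowhere zero on $B_r(0)$ and your $\gamma,\sigma$ are well-defined and holomorphic on the full disc from the start.
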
 

\begin{proof} 
Define functions $C$ and $S$ in $B_r (0)$ by $C(z) = 1/c(-z)$ and $S(z) = - s(-z) / c(-z)$. Straightforwardly,  $(C, S)$ satisfies {\bf IVP} and so coincides with $(c, s)$ by Theorem \ref{pair}. 
\end{proof} 

\medbreak 

Next, the functions $c$ and $s$ have certain three-fold symmetries. Write 
$$\gamma = \exp(2 \pi i /3) = (-1 + i \sqrt{3}) / 2$$
so that $\{ 1, \gamma, \overline{\gamma} = \gamma^{-1} = \gamma^2 \}$ is the set of complex cube-roots of unity. 

\medbreak 

\begin{theorem} \label{gamma}
If $z \in B_r (0)$ then $s(\gamma z) = \gamma \: s(z)$ and $c(\gamma z) = c(z)$. 
\end{theorem}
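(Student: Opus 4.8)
The plan is to mimic the proofs of Theorems \ref{real} and \ref{neg}: produce a candidate pair $(C, S)$ manufactured from $c$ and $s$ that visibly solves {\bf IVP}, and then invoke the uniqueness clause of Theorem \ref{pair}. Since $|\gamma| = 1$, the rotation $z \mapsto \gamma z$ carries the disc $B_r(0)$ onto itself, so the functions $C(z) := c(\gamma z)$ and $S(z) := \gamma^{-1} s(\gamma z)$ (recall $\gamma^{-1} = \gamma^2$) are well-defined and holomorphic on all of $B_r(0)$.

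Next I would verify the data of {\bf IVP} for this pair. The initial conditions are immediate: $C(0) = c(0) = 1$ and $S(0) = \gamma^{-1} s(0) = 0$. For the differential equations, I apply the chain rule together with {\bf IVP} for $(c, s)$: on the one hand $S'(z) = \gamma^{-1} \cdot \gamma \, s\,'(\gamma z) = c(\gamma z)^2 = C(z)^2$, and on the other hand $C'(z) = \gamma \, c\,'(\gamma z) = -\gamma \, s(\gamma z)^2 = -\gamma^{-2} s(\gamma z)^2 = -S(z)^2$, where the step $\gamma = \gamma^{-2}$ is precisely where the relation $\gamma^3 = 1$ is used. Hence $(C, S)$ satisfies {\bf IVP} on $B_r(0)$, so Theorem \ref{pair} gives $(C, S) = (c, s)$; unwinding the definitions yields $c(\gamma z) = c(z)$ and $\gamma^{-1} s(\gamma z) = s(z)$, i.e. $s(\gamma z) = \gamma \, s(z)$.

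There is no genuine obstacle here; the proof is a routine application of the uniqueness theorem. The only points demanding a little care are the bookkeeping with powers of $\gamma$ in the verification of $C' = -S^2$ (where $\gamma^3 = 1$ is essential) and the observation that $B_r(0)$, being a disc centred at the origin, is invariant under multiplication by $\gamma$, so that the composite functions are defined on the whole domain.
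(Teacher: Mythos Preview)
Your proof is correct and follows exactly the same approach as the paper: define $C(z)=c(\gamma z)$ and $S(z)=\overline{\gamma}\,s(\gamma z)$ (your $\gamma^{-1}$ being the paper's $\overline{\gamma}$), verify that $(C,S)$ satisfies {\bf IVP}, and invoke Theorem~\ref{pair}. The paper merely calls the verification ``straightforward,'' whereas you spell out the chain-rule computation and the role of $\gamma^3=1$; the arguments are otherwise identical.
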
 

\begin{proof} 
Define functions $C$ and $S$ in $B_r (0)$ by $C(z) = c(\gamma z)$ and $S(z) = \overline{\gamma} \:s(\gamma z)$. It is again straightforward to check that $(C, S)$ satisfies {\bf IVP} so that $(C, S) = (c, s)$ by Theorem \ref{pair}. 
\end{proof} 

\medbreak 

It is perhaps worth remarking that these three-fold symmetries of $c$ and $s$ serve as Dixonian counterparts of `parity'. 

\medbreak 

From Theorem \ref{gamma}, it follows that $c(z)$ is real not only when $z$ is real but also when $z \in \gamma \: \R$ and when $z \in \overline{\gamma} \: \R$; likewise, if $z \in \R$ then $s(z) \in \R$ while if $z \in \gamma^{\pm 1} \: \R$ then $s(z) \in \gamma^{\pm 1} \: \R$ respectively. Note also that Theorem \ref{real}, Theorem \ref{neg} and Theorem \ref{gamma} together yield the action of the cube-roots of $-1$ on $s$ and $c$: for example, 
$$s( - \gamma z) = - \gamma s(z)/c(z)\; \; {\rm and} \; \; c(- \gamma z) = 1 / c(z).$$

\medbreak

Now is perhaps as good a place as any to make contact with the integral formula that was stated in the introduction. 

\medbreak 

\begin{theorem} \label{integral}
If $z \in B_r (0)$ then 
$$\int_0^{\sm (z)} \frac{{\rm d} \sigma}{(1 - \sigma^3)^{2/3}} = z.$$
\end{theorem}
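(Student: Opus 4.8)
The plan is to recognize this as essentially a statement that $s$ inverts the integral, which follows directly from the differential equation $s\,' = (1-s^3)^{2/3}$ by a change of variables. First I would fix $z \in B_r(0)$ and consider the path integral defining the left-hand side: since $s(0)=0$ and $s$ maps $B_r(0)$ into the open unit disc (by Theorem \ref{init}), the straight-line path from $0$ to $z$ maps under $s$ to a path in the unit disc along which $1 - \sigma^3$ stays away from the negative real axis' worst behavior, so $(1-\sigma^3)^{2/3}$ (principal value) is holomorphic and nonvanishing on a neighborhood of the image path; thus the contour integral $\int_0^{\sm(z)} \mathrm{d}\sigma/(1-\sigma^3)^{2/3}$ is well-defined and path-independent there.

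Next I would parametrize: write the integral as $\int_0^1 \frac{s\,'(tz)\, z\, \mathrm{d}t}{(1 - s(tz)^3)^{2/3}}$, substituting $\sigma = s(tz)$, $\mathrm{d}\sigma = s\,'(tz)\, z\, \mathrm{d}t$. By the differential equation, $s\,'(tz) = (1 - s(tz)^3)^{2/3}$, where one must check that the value $(1-s(tz)^3)^{2/3}$ produced by the ODE agrees with the principal-valued power appearing in the integrand — but this is exactly how $s$ was constructed in Theorem \ref{init} (the ODE was $s\,' = (1-s^3)^{2/3}$ with the principal-valued right-hand side, valid since $s$ stays in the open unit disc). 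Hence the integrand collapses to $z$, and $\int_0^1 z\, \mathrm{d}t = z$, as desired.

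Alternatively, and perhaps more cleanly for the write-up, I would argue by analytic continuation along the lines of the paper's stated methodology: define $\Phi(z) = \int_0^{\sm(z)} \mathrm{d}\sigma/(1-\sigma^3)^{2/3}$ as a holomorphic function on $B_r(0)$ (using the well-definedness established above, plus holomorphic dependence of the integral on its variable endpoint via $s$), compute $\Phi(0) = 0$, and differentiate using the fundamental theorem of calculus and the chain rule to get $\Phi\,'(z) = s\,'(z)/(1 - s(z)^3)^{2/3} = 1$, again invoking $s\,' = (1-s^3)^{2/3}$. Then $\Phi(z) = z$ on $B_r(0)$ by the Identity Theorem (or just because a holomorphic function with derivative $1$ and value $0$ at $0$ is the identity on a connected domain).

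The main obstacle — really the only subtle point — is the branch-of-the-power bookkeeping: one must be sure that the principal-valued $(1-\sigma^3)^{2/3}$ in the integrand, evaluated along the image path $\sigma = s(tz)$, is genuinely the same holomorphic branch that satisfies $s\,'(tz) = (1-s(tz)^3)^{2/3}$. Here the key fact is that $s$ takes values in the \emph{open} unit disc, so $1 - s(tz)^3$ lies in the open disc of radius $1$ centered at $1$, which avoids the branch cut of the principal power; on that simply connected region the principal branch of $(\cdot)^{2/3}$ is holomorphic, single-valued, and equals $1$ at the point $1$ (matching $s\,'(0) = 1$), so continuity forces agreement everywhere along the path. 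Once this is pinned down the computation is immediate, so I would devote a sentence or two to it and treat the rest as routine.
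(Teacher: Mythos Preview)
Your proposal is correct, and your second (``cleaner'') approach is exactly the paper's proof: define the integral as a holomorphic function $g$ on $B_r(0)$, note $g(0)=0$, compute $g'(z)=s'(z)/(1-s(z)^3)^{2/3}=1$ via Theorem~\ref{init}, and conclude $g(z)=z$. Your careful discussion of the branch issue (that $1-s(tz)^3$ stays in the open disc of radius $1$ about $1$, so the principal power is unambiguous and matches the ODE) is more explicit than the paper, which simply records that the integrand is holomorphic in the open unit disc; this extra care is appropriate and costs nothing.
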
 

\begin{proof} 
The integrand $\sigma \mapsto (1 - \sigma^3)^{-2/3}$ is certainly holomorphic in the open unit disc, to which $B_r (0)$ is mapped by $s$. Accordingly, we may consider the holomorphic function 
$$g: B_r (0) \to \C : z \mapsto \int_0^{\sm (z)} \frac{{\rm d} \sigma}{(1 - \sigma^3)^{2/3}}\, .$$
By Theorem \ref{init} we deduce that 
$$g\,' (z) = \frac{1}{(1 - s(z)^3)^{2/3}} \cdot s\,' (z) = 1$$
whence by $g(0) = 0$ we conclude that $g(z) = z$ whenever $z \in B_r (0)$. 
\end{proof} 

\medbreak 

Here, the integration may be effected along any contour in the open unit disc with the indicated endpoints.

\medbreak 

We may now determine precisely where in $B_r(0)$ the cube $s^3$ takes real values: Theorem \ref{real} and Theorem \ref{gamma} inform us that $s(z)^3$ is real when $z^3$ is real; an application of Theorem \ref{integral} yields the following converse.

\medbreak 

\begin{theorem} \label{reality}
If $z \in B_r(0)$ and $s(z)^3 \in \R$ then $s(z)$ is a positive real multiple of $z$. 
\end{theorem}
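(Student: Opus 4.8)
The plan is to reduce everything to the integral formula of Theorem~\ref{integral}, evaluated along a straight-line contour, after first using the three-fold symmetry of Theorem~\ref{gamma} to rotate $s(z)$ onto the real axis. The starting observation is that a complex number has real cube precisely when its argument is an integer multiple of $\pi/3$, which is to say $s(z)^3 \in \R$ exactly when $s(z) \in \R \cup \gamma\, \R \cup \gamma^2\, \R$. Hence there exist a cube-root of unity $\omega \in \{1, \gamma, \gamma^2\}$ and a real number $t$ with $|t| < 1$ (recall that $s$ is valued in the open unit disc) such that $s(z) = \omega\, t$.

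Next I would reduce to the case $\omega = 1$. Put $z' = \omega^{-1} z$; this again lies in $B_r(0)$ since $|\omega^{-1} z| = |z|$, and $\omega^{-1}$ is itself a cube-root of unity, so Theorem~\ref{gamma} gives $s(z') = \omega^{-1} s(z) = t \in \R$. If I can show that a real value of $s$ forces $s(z')$ to be a positive real multiple of $z'$, say $s(z') = \lambda z'$ with $\lambda > 0$, then multiplying back through by $\omega$ yields $s(z) = \omega\, s(z') = \lambda\, \omega z' = \lambda z$, which is the desired conclusion.

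It therefore remains to handle $s(z) = t \in (-1,1)$. If $t = 0$ then, by Theorem~\ref{integral}, $z$ is the integral of the holomorphic integrand $(1 - \sigma^3)^{-2/3}$ around a closed contour in the simply connected open unit disc, so $z = 0$ and there is nothing to prove. If $t \neq 0$, I would invoke Theorem~\ref{integral} with the contour taken to be the straight segment from $0$ to $t$, which lies in the open unit disc by convexity and is admissible by the remark following that theorem. Along this segment $\sigma$ is real with $\sigma^3 \in (-1,1)$, so $1 - \sigma^3$ is a positive real and the integrand $(1 - \sigma^3)^{-2/3}$ is a positive real; hence
$$z = \int_0^t \frac{{\rm d}\sigma}{(1 - \sigma^3)^{2/3}}$$
is real and of the same sign as $t$ (in particular $z \neq 0$). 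Thus $\lambda := t/z = s(z)/z$ is a positive real and $s(z) = \lambda z$. The only point requiring a modicum of care is the choice of a real contour on which the integrand stays on the positive real axis — which follows at once from $|s(z)| < 1$ and the positivity of $1 - x^3$ on $(-1,1)$ — together with the bookkeeping needed to transport the result back through the symmetry of Theorem~\ref{gamma}; there is no serious obstacle beyond this.
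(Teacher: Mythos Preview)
Your proof is correct and follows essentially the same idea as the paper's: both evaluate the integral of Theorem~\ref{integral} along the straight segment from $0$ to $s(z)$ and observe that the integrand is a positive real there. The paper handles all three rays at once by the substitution $\sigma = s(z)\,t$ (so that $1 - \sigma^3 = 1 - s(z)^3 t^3$ is positive real directly from $s(z)^3 \in \R$ and $|s(z)| < 1$), which makes your preliminary reduction via Theorem~\ref{gamma} unnecessary, but the core argument is the same.
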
 

\begin{proof} 
We may fix $z$ to be nonzero and in Theorem \ref{integral} integrate along the interval from $0$ to $s(z)$ by substituting $s(z) t$ for $\sigma$: there results 
$$z = \int_0^1 \frac{s(z) {\rm d} t}{(1 - s(z)^3 t^3)^{2/3}} = \Big[ \int_0^1 \frac{{\rm d} t}{(1 - s(z)^3 t^3)^{2/3}}\Big] \: s(z)$$
where the integral enclosed in brackets is a positive real since here $| s(z)| < 1$ and $s(z)^3 \in \R$ by assumption. 
\end{proof} 

\medbreak 

Thus, the cube $s^3$ takes real values precisely where its domain $B_r (0)$ meets the six rays through $0$ along which $\arg$ is a multiple of $\pi / 3$. 

\medbreak 

Our aim now is to extend the solution $(c, s)$ beyond the disc $B_r (0)$: as we shall see, the holomorphic functions $c$ and $s$ in $B_r (0)$ actually extend throughout the plane as meromorphic functions, each of which is doubly-periodic and therefore elliptic; further, the symmetry properties expressed in Theorem \ref{real}, Theorem \ref{neg} and Theorem \ref{gamma} propagate beyond $B_r (0)$ by the Identity Theorem. 

\medbreak 

The mechanism by which we propose to extend $c$ and $s$ beyond the initial disc may be traced to Weierstrass, who introduced it in his characterization of analytic functions that possess algebraic addition theorems; it also played a part in the masterly treatment [5] of Jacobian elliptic functions by Neville. In short, the mechanism rests on duplication. Each of the Dixonian functions $\cm$ and $\sm$ is known to satisfy a duplication formula involving only the functions $\cm$ and $\sm$ themselves: we take these duplication formulae as given, applying them to the functions $c$ and $s$ instead; we thereby extend $c$ and $s$ to the disc about $0$ having twice the radius. Repeated reduplication has to take account of poles, but the result is to extend $c$ and $s$ to functions that are meromorphic in the plane and still satisfy {\bf IVP}. Needless to say, only the shapes of the duplication formulae are taken as given; aside from this foreknowledge, everything rests ultimately upon and is developed rigorously from {\bf IVP}. 

\medbreak 

Continue to write $r = 2^{-2/3}$ for convenience. With $s$ and $c$ on $B_r (0)$ as given in Theorem \ref{init} and immediately thereafter, we define functions $S$ and $C$ on $B_{2 r} (0)$ by declaring that if $2 z \in B_{2 r} (0)$ then  
$$S(2 z) = \frac{s(z)(1 + c(z)^3)}{c(z)(1 + s(z)^3)}$$
and 
$$C(2 z) = \frac{c(z)^3 - s(z)^3}{c(z)(1 + s(z)^3)}$$
wherein the denominators are nonzero because $s$ takes its values in the open unit disc and $c^3 + s^3 = 1$. Each of the functions $S$ and $C$ is holomorphic in $B_{2 r} (0)$: in fact, elementary differentiation shows that $S\,' = C^2$ and $C\,' = -S^2$ while simple evaluation shows that $S(0) = 0$ and $C(0) = 1$. According to Theorem \ref{pair} (and so ultimately to the Picard theorem), the restrictions of $C$ and $S$ from $B_{2 r} (0)$ to $B_r (0)$ are $c$ and $s$ respectively. This proves that the unique solution $(c, s)$ to {\bf IVP} in the disc $B_r (0)$ of radius $r = 2^{-2/3}$ extends to a (unique) solution $(C, S)$ to {\bf IVP} in the disc $B_{2 r} (0)$ of radius $2 r = 2^{1/3}$. 

\medbreak 

As no confusion is likely to arise, we shall now drop the capitalization, referring to the extended functions simply as $c : B_{2 r}(0) \to \C$ and $s : B_{2 r}(0) \to \C$; this frees capitalization for repeated use in what follows. 

\medbreak 

We now pause to take stock of our findings. 

\medbreak 

\begin{theorem} \label{2 r}
The initial value problem {\bf IVP} has a unique holomorphic solution $(c, s)$ in $B_{2 r} (0)$.
\end{theorem}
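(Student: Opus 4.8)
The plan is to verify directly that the functions $S$ and $C$ defined on $B_{2r}(0)$ by the two duplication formulae constitute a holomorphic solution of {\bf IVP}, and then to extract uniqueness from the uniqueness already established on $B_r(0)$ together with the Identity Theorem. Everything beyond one algebraic computation is formal.

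First I would record that $S$ and $C$ are well-defined holomorphic functions on $B_{2r}(0)$. Since $s$ maps $B_r(0)$ into the open unit disc and $c^3 + s^3 = 1$ there, the denominator $c(z)(1 + s(z)^3)$ is nowhere zero, so the quotients prescribing $S(2z)$ and $C(2z)$ are holomorphic in $z \in B_r(0)$; equivalently $S$ and $C$ are holomorphic as the argument $2z$ ranges over $B_{2r}(0)$. Evaluating at the origin, $s(0) = 0$ and $c(0) = 1$ give $S(0) = 0$ and $C(0) = 1$, so the initial conditions are met.

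The substantive step is to check $S\,' = C^2$ and $C\,' = -S^2$. Writing $c = c(z)$, $s = s(z)$, I would differentiate the rational expressions for $S(2z)$ and $C(2z)$ with respect to $z$ (the chain rule producing a factor $2$ on the left), substitute the relations $c\,' = -s^2$ and $s\,' = c^2$ from {\bf IVP}, and then simplify using $c^3 + s^3 = 1$ to clear the algebra; each desired identity reduces to a polynomial identity in $c$ and $s$ modulo $c^3 + s^3 - 1$. This differentiation is the one place where a genuine—though entirely routine—calculation is required, and it is the main obstacle only in the sense that it is the sole non-formal ingredient. (As a consistency check, $S\,' = C^2$ and $C\,' = -S^2$ force $C^3 + S^3$ to be constant, and its value at $0$ is $1$.)

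Finally, having shown that $(C, S)$ solves {\bf IVP} on $B_{2r}(0)$, its restriction to $B_r(0)$ is a holomorphic solution of {\bf IVP} there, hence equals $(c, s)$ by Theorem \ref{pair}; thus $(C, S)$ genuinely extends the original pair. For uniqueness, let $(\widetilde c, \widetilde s)$ be any holomorphic solution of {\bf IVP} on $B_{2r}(0)$. Its restriction to $B_r(0)$ solves {\bf IVP} there, so by Theorem \ref{pair} it equals $(c, s)$; since $B_{2r}(0)$ is open and connected, the Identity Theorem forces $(\widetilde c, \widetilde s) = (C, S)$ on all of $B_{2r}(0)$. This proves the stated existence and uniqueness.
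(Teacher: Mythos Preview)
Your proposal is correct and follows essentially the same approach as the paper: the paper places the verification that $(C,S)$ is holomorphic on $B_{2r}(0)$, satisfies the differential system, meets the initial conditions, and restricts to $(c,s)$ on $B_r(0)$ in the discussion immediately preceding the theorem, so that its formal proof is just a one-line reference back to those displayed formulae together with the remark that uniqueness is clear. Your write-up simply incorporates that surrounding discussion into the body of the proof and makes the uniqueness step (restriction plus Identity Theorem) explicit.
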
 
 
\begin{proof} 
Existence is established by the formulae for $C$ and $S$ displayed prior to the theorem; uniqueness is clear. 
\end{proof} 

\medbreak 

Notice that we have now established the following duplication formulae for $s$ and $c$: if $z \in B_r (0)$ then 
$$s(2 z) = \frac{s(z)(1 + c(z)^3)}{c(z)(1 + s(z)^3)}$$
and 
$$c(2 z) = \frac{c(z)^3 - s(z)^3}{c(z)(1 + s(z)^3)}.$$
As was promised above, these formulae have been established directly from the {\bf IVP} definition of $s$ and $c$; foreknowledge `only' saved us the work involved in discovering the formulae that must be established. 

\medbreak 

\begin{theorem} \label{zero}
The function $s$ has $0$ as its only zero in the disc $B_{2 r} (0)$. 
\end{theorem}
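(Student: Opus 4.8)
The plan is to reduce the statement on $B_{2r}(0)$ to the corresponding (essentially trivial) fact on $B_r(0)$ by means of the duplication formula for $s$ just recorded. Suppose $z_0 \in B_{2r}(0)$ satisfies $s(z_0) = 0$, and put $w = z_0/2$; then $|w| < r$, so $w \in B_r(0)$ and the duplication formula applies:
$$s(z_0) = s(2w) = \frac{s(w)\,(1 + c(w)^3)}{c(w)\,(1 + s(w)^3)}\, .$$
Since the denominator is nonzero (as was noted when the formula was introduced, $s$ being valued in the open unit disc and $c^3 + s^3 = 1$), the vanishing $s(z_0) = 0$ forces $s(w)\,(1 + c(w)^3) = 0$.

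Next I would rule out the factor $1 + c(w)^3$. Because $c^3 + s^3 = 1$ throughout $B_r(0)$, we have $c(w)^3 + 1 = 2 - s(w)^3$; and since, by Theorem \ref{init}, $s$ takes its values in the open unit disc, $|s(w)^3| = |s(w)|^3 < 1$, so that $|c(w)^3 + 1| \geq 2 - |s(w)|^3 > 1 > 0$. Hence $1 + c(w)^3 \neq 0$, and we are left with $s(w) = 0$ for some $w \in B_r(0)$.

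Finally I would invoke Theorem \ref{integral}: from $s(w) = 0$ it follows that $w = \int_0^{s(w)} (1 - \sigma^3)^{-2/3}\,{\rm d}\sigma = \int_0^0 (1 - \sigma^3)^{-2/3}\,{\rm d}\sigma = 0$, the integrand being holomorphic (hence the integral path-independent) in the open unit disc. Thus $z_0 = 2w = 0$, which is precisely the assertion. Equivalently, one may simply recall that the function $g$ of Theorem \ref{integral} satisfies $g(z) = z$ on $B_r(0)$ with $g(0) = 0$, so $s$ can vanish there only at the origin.

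As to difficulty, the argument is almost entirely mechanical once the duplication formula is available; the one point demanding a moment's care — and the step I would flag as the main obstacle — is the non-vanishing of $1 + c(w)^3$, since that is the single place where the structural identity $c^3 + s^3 = 1$ together with the bound $|s| < 1$ on $B_r(0)$ is genuinely brought to bear. The rest is bookkeeping (scaling $z_0 \mapsto z_0/2$ to land in $B_r(0)$, and reading off the zero of $s$ on $B_r(0)$ from the integral representation).
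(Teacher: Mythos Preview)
Your argument is correct, but it follows a different route from the paper's. The paper argues by infinite descent: assuming a nonzero zero $z \in B_{2r}(0)$, one application of the duplication formula (using $1 + c^3 \neq 0$ on $B_r(0)$) gives $s(z/2) = 0$; iterating produces $s(2^{-n}z) = 0$ for every $n$, so the zeros of $s$ accumulate at $0$, and the Identity Theorem forces $s \equiv 0$, a contradiction. You instead halve only once and then invoke Theorem~\ref{integral} to see directly that $s$ has no nonzero zero in $B_r(0)$. Your route is shorter and avoids the accumulation-point/Identity-Theorem step, at the cost of importing the integral representation; the paper's route is more self-contained within the duplication framework and uses only the Identity Theorem, which in the broader narrative is the workhorse tool. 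Either way, the one substantive point---that $1 + c(w)^3 \neq 0$ on $B_r(0)$---is handled identically, and you have spelled it out cleanly via $1 + c^3 = 2 - s^3$ and $|s| < 1$.
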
 

\begin{proof} 
Deny: suppose that $z \in B_{2 r} (0)$ is nonzero but $s(z) = 0$. Recall that $1 + c^3$ is nowhere zero on the disc $B_r (0)$ and apply the duplication formula inductively in reverse: it follows that $s(2^{-n} z) = 0$ whenever $n$ is a natural number, so $0$ is a limit point of the zeros of $s$. This forces $s$ to be identically zero, which absurdity faults the supposition that $s$ has a nonzero zero. 
\end{proof} 

\medbreak 

As a corollary, $0$ is the only point of $B_{2 r} (0)$ at which the value of $c$ is a cube-root of unity. 

\medbreak 

\begin{theorem} \label{K}
The function $s$ is strictly increasing on the interval $(-2 r, 2 r)$. There exists a unique $K \in (0, 2)$ such that $s(-\tfrac{1}{2}K) = -1$. 
\end{theorem}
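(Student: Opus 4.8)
The statement has two parts, and the engine for both is the positivity of $c$ throughout the interval, so I would establish that first. On the subinterval $(-r, r)$ it is immediate: there $s$ is real (Theorem \ref{real}) with $|s| < 1$ (Theorem \ref{init}), hence $c = (1 - s^3)^{1/3}$ is a positive real cube-root, so $s\,' = c^2 > 0$, and already $s$ is strictly increasing on $(-r, r)$ with $s(0) = 0$ and $s > 0$ on $(0, r)$. To push positivity out to $(-2r, 2r)$ I would use the duplication formula for $c$: for $y \in (-r, r)$,
$$c(2 y) = \frac{c(y)^3 - s(y)^3}{c(y)(1 + s(y)^3)} = \frac{1 - 2 s(y)^3}{c(y)(1 + s(y)^3)},$$
whose denominator is positive (as $c(y) > 0$ and $s(y) > -1$), so $c(2 y) > 0$ precisely when $s(y)^3 < \tfrac12$, i.e. $s(y) < 2^{-1/3}$. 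This last inequality is trivial for $y \le 0$, and for $y \in (0, r)$ it follows from Theorem \ref{integral}: since $0 < s(y) < 1$ and the integrand $(1 - \sigma^3)^{-2/3}$ is $\ge 1$ on $[0, 1)$,
$$y = \int_0^{s(y)} \frac{{\rm d} \sigma}{(1 - \sigma^3)^{2/3}} \ge s(y),$$
whence $s(y) \le y < r = 2^{-2/3} < 2^{-1/3}$. Therefore $c > 0$ on all of $(-2r, 2r)$, and $s\,' = c^2 > 0$ there delivers the first claim.

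For the existence of $K$ I would extend the integral representation of Theorem \ref{integral} to the negative axis. Since $s$ is now known to be strictly increasing with $s(0) = 0$, the extended $s$ maps $(-2r, 0]$ into $(-\infty, 0]$, and on each segment $[s(x), 0] \subset (-\infty, 1)$ the integrand $\sigma \mapsto (1 - \sigma^3)^{-2/3}$ is holomorphic; thus $g(x) := \int_0^{s(x)} (1 - \sigma^3)^{-2/3}\,{\rm d}\sigma$ is $C^1$ on $(-2r, 0]$, and since $c > 0$ ensures $s\,' = c^2 = (1 - s^3)^{2/3}$ there, the computation from the proof of Theorem \ref{integral} goes through unchanged to give $g\,'(x) = 1$; with $g(0) = 0$ this yields $g(x) = x$ on $(-2r, 0]$. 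Now put
$$K := 2 \int_0^1 \frac{{\rm d} \tau}{(1 + \tau^3)^{2/3}} = 2 \int_{-1}^0 \frac{{\rm d} \sigma}{(1 - \sigma^3)^{2/3}}.$$
Because $0 < (1 + \tau^3)^{-2/3} < 1$ on $(0, 1]$, we have $0 < K < 2$, so $-\tfrac12 K \in (-1, 0) \subset (-2r, 0)$, and then
$$\int_0^{s(-K/2)} \frac{{\rm d} \sigma}{(1 - \sigma^3)^{2/3}} = g\big(-\tfrac12 K\big) = -\tfrac12 K = \int_0^{-1} \frac{{\rm d} \sigma}{(1 - \sigma^3)^{2/3}}.$$
As $w \mapsto \int_0^w (1 - \sigma^3)^{-2/3}\,{\rm d}\sigma$ is strictly increasing on $(-\infty, 1)$ and both $s(-\tfrac12 K)$ and $-1$ lie there, we conclude $s(-\tfrac12 K) = -1$. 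Uniqueness is immediate: any $K \in (0, 2)$ with $s(-\tfrac12 K) = -1$ has $-\tfrac12 K \in (-2r, 2r)$, where $s$ is injective.

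The one genuinely substantive step, and where I expect the real work to sit, is the estimate $s(y) < 2^{-1/3}$ on $(0, r)$ that keeps reduplication from creating a zero of $c$ in $(0, 2r)$: it is here, through the numerical fact $2^{-2/3} < 2^{-1/3}$, that the specific radius $r = 2^{-2/3}$ is used, and it is the only point at which a bound on the arc-length integral (even a crude one) is needed rather than pure symmetry bookkeeping. A minor additional point to verify is the admissibility of redoing the proof of Theorem \ref{integral} on the negative axis, which only requires that the integrand stay holomorphic along the segments $[s(x), 0]$, safely to the left of the branch point $\sigma = 1$.
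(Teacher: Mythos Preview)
Your proof is correct, but it takes a considerably longer route than the paper. The paper dispatches both claims in a few lines by working only on the negative half-interval: since $c$ is real there, $s' = c^2 \ge 0$, so $s$ is non-decreasing; as $s$ has $0$ as its only zero (Theorem~\ref{zero}), $s(t) < 0$ for $t \in (-2r, 0)$, whence $c(t)^3 = 1 - s(t)^3 > 1$ and $s'(t) = c(t)^2 > 1$, giving $s(t) < t$. Since $-2r = -2^{1/3} < -1$, the Intermediate Value Theorem then locates the unique $-\tfrac12 K \in (-1,0)$ with $s(-\tfrac12 K) = -1$. No duplication formula, no positive-side estimate, no extension of the integral representation is invoked.

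Your approach establishes strictly more than the theorem asks---that $c > 0$ on the \emph{entire} interval $(-2r, 2r)$, and that $K$ has the explicit integral form $2\int_0^1 (1+\tau^3)^{-2/3}\,{\rm d}\tau$---and both of these do surface later in the paper: the bound $s(t) < t < 2^{-1/3}$ on $(0,r)$ is exactly the argument the paper uses (just after this theorem) to rule out zeros of $c$ in $B_{2r}(0)$, and the integral formula for $K$ is recorded after Theorem~\ref{K -}. So you have front-loaded useful work that the paper defers or obtains as a byproduct, at the cost of obscuring how little is actually required here. In particular, your identification of the estimate $s(y) < 2^{-1/3}$ as ``the one genuinely substantive step'' is accurate for \emph{your} proof but not for the paper's: the paper never touches the positive half-axis in proving this theorem, and the specific value $r = 2^{-2/3}$ enters only through the cruder inequality $2r > 1$.
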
 

\begin{proof} 
The fact that $s$ is strictly increasing is clear, for $s\,' = c^2>0$ throughout the interval. If $- 2 r < t < 0$ then $s(t) < 0$ so that $s\,' (t) = (1 - s(t)^3)^{2/3} > 1$ and therefore $s(t) < t$. As $- 2 r = - 2^{1/3} < -1$ we deduce that $s$ takes the value $-1$ at exactly one point of $(- 1 , 0)$; this point of $(-1, 0)$ we name $- \tfrac{1}{2} K$. 
\end{proof} 

\medbreak 

The symmetry of $s$ stated in Theorem \ref{gamma} implies that $s^3$ takes the value $-1$ not only at $-\tfrac{1}{2}K$ but also at $-\tfrac{1}{2} K \gamma$ and at $-\tfrac{1}{2} K \overline{\gamma}$. Notice that $c(-\tfrac{1}{2}K) = 2^{1/3}$ and $s(\tfrac{1}{2}K) = c(\tfrac{1}{2}K) = 2^{-1/3}$ by the identity $s^3 + c^3 = 1$ along with Theorem \ref{real} and Theorem \ref{neg} extended to $B_{2 r}(0)$. 

\medbreak 

We now consider, after this first duplication from $B_r (0)$ to $B_{2 r} (0)$, a second duplication from $B_{2 r} (0)$ to $B_{4 r} (0)$. Thus, for $z \in B_{2 r} (0)$ let us attempt the definitions 
$$S(2 z) = \frac{s(z)(1 + c(z)^3)}{c(z)(1 + s(z)^3)}$$
and 
$$C(2 z) = \frac{c(z)^3 - s(z)^3}{c(z)(1 + s(z)^3)}. $$
This attempt is successful in defining holomorphic functions $S$ and $C$ except at such points $z$ of $B_{2 r} (0)$ as satisfy either $c(z) = 0$ or $1 + s(z)^3 = 0$; equivalently, except at such points $z$ of $B_{2 r} (0)$ as satisfy $s(z)^3 = \pm 1$. We proceed to examine these two types of point separately. 

\medbreak 

Assume first that $c(z) = 0$ and write $z = 2 w$ with $w \in B_r (0)$. From the duplication formula for $c$ displayed after Theorem \ref{2 r}, it follows that $c(w)^3 - s(w)^3 = 0$ so that $c(w)^3 = s(w)^3 = 1/2$. From Theorem \ref{reality} it then follows that $w$ (or $\gamma w$ or $\overline{\gamma} w$) lies in the real interval $(0, r)$. However, if $0 < t < r$ then $0 < s(t) < 1$ so that $s\,'(t) = (1 - s(t)^3)^{2/3} < 1$ and therefore $0 < s(t) < t < r = 2^{-2/3} < 2^{-1/3}$. This rules out the existence of such a $w$ and hence of such a $z$. 

\medbreak 

Assume instead that $s(z)^3 = -1$ and again write $z = 2 w$ with $w \in B_r (0)$. For convenience, write $\sigma = s(w)$ and note that $\sigma$ lies in the open unit disc. From the duplication formula for $s$ displayed after Theorem \ref{2 r}, it follows that 
$$-1 = \Big(\frac{s(w)}{c(w)}\Big)^3\Big(\frac{1 + c(z)^3}{1 + s(z)^3}\Big)^3 = \frac{\sigma}{1 - \sigma} \frac{(2 - \sigma)^3}{(1 + \sigma)^3}$$
or 
$$0 = 1 + 10 \sigma - 12 \sigma^2 + 4 \sigma^3 - 2 \sigma^4.$$
This quartic is satisfied by only one value of $\sigma$ in the open unit disc: namely, 
$$\sigma = \Big(1 - \sqrt{3 (2\sqrt{3} - 3)}\, \Big) / 2 = -0.0899798 ... \, .$$
From Theorem \ref{reality} it follows that $w$ (or $\gamma w$ or $\overline{\gamma} w$) lies in the real interval $(-r, 0)$, on which $s$ is strictly increasing. This proves the existence of at most three such $w$ and hence at most three such $z$. 

\medbreak 

Of course, we are already in possession of three points $z \in B_{2 r} (0)$ such that $s(z)^3 = -1$: namely, $- \tfrac{1}{2} K$, $- \tfrac{1}{2} K \gamma$ and $- \tfrac{1}{2} K \overline{\gamma}$; so these are the precise points at which our attempt to define $S(2 z)$ and $C(2 z)$ is unsuccessful. 

\medbreak 

\begin{theorem} \label{4 r}
{\bf IVP} has a unique solution $(c, s)$ in $B_{4 r} (0)$, each of $c$ and $s$ being meromorphic, with simple poles at the points $-K$, $- K \gamma$ and $- K \overline{\gamma}$. 
\end{theorem}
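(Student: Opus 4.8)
The plan is to perform exactly one more duplication, extending $(c, s)$ from $B_{2 r}(0)$ to $B_{4 r}(0)$ by the formulae for $S$ and $C$ attempted just above, and then to examine the three exceptional points. The first thing I would do is locate $-\tfrac{1}{2} K$, $-\tfrac{1}{2} K \gamma$ and $-\tfrac{1}{2} K \overline{\gamma}$ relative to the discs in play. Since $s$ maps $B_r (0)$ into the \emph{open} unit disc while $s(-\tfrac{1}{2} K) = -1$, the point $-\tfrac{1}{2} K$ cannot lie in $B_r (0)$; hence $\tfrac{1}{2} K \geq r$, that is $K \geq 2 r$, whereas $K < 2 < 2^{4/3} = 4 r$ by Theorem \ref{K}. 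Thus each of $-K$, $-K \gamma$, $-K \overline{\gamma}$ lies in $B_{4 r}(0)$ but outside $B_{2 r}(0)$, and none of the three exceptional points is obtained by doubling a point of $B_r (0)$.

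Next I would define $S$ and $C$ on $B_{4 r}(0) \setminus \{ -K, -K \gamma, -K \overline{\gamma} \}$ by the attempted formulae: for $\zeta$ in this set, write $\zeta = 2 z$ with $z \in B_{2 r}(0)$ and read off $S(\zeta)$ and $C(\zeta)$. By the discussion preceding the theorem these are holomorphic there, the recipe failing only when $z \in \{ -\tfrac{1}{2} K, -\tfrac{1}{2} K \gamma, -\tfrac{1}{2} K \overline{\gamma} \}$, which has been removed. The same elementary differentiation as in the first duplication gives $S\,' = C^2$ and $C\,' = -S^2$ wherever $S$ and $C$ are holomorphic, and evidently $S(0) = 0$ and $C(0) = 1$; restricting to $B_{2 r}(0)$, which by the previous paragraph contains none of the three poles, and invoking Theorem \ref{2 r} identifies $(C, S)$ there with $(c, s)$. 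So $(C, S)$ is a holomorphic extension of $(c, s)$ across $B_{4 r}(0) \setminus \{ -K, -K \gamma, -K \overline{\gamma} \}$ still satisfying {\bf IVP}.

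It then remains to see that the three removed points are simple poles and nothing worse. Near $w_0 = -\tfrac{1}{2} K$ one has $s(w_0) = -1$ and $c(w_0) = 2^{1/3}$, so $1 + s(w)^3$ has a simple zero at $w_0$ — its derivative there is $3 s^2 s\,' = 3 s^2 c^2 = 3 \cdot 2^{2/3} \neq 0$ — while $s(w_0)(1 + c(w_0)^3) = -3$, $c(w_0)^3 - s(w_0)^3 = 3$ and $c(w_0) = 2^{1/3}$ are all nonzero; hence both $S$ and $C$ have simple poles at $2 w_0 = -K$. The three-fold symmetry of Theorem \ref{gamma} (propagated by the Identity Theorem), or the same computation at the other two points, yields simple poles at $-K \gamma$ and $-K \overline{\gamma}$, while $(C, S)$ is holomorphic at every other point of $B_{4 r}(0)$. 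For uniqueness: any meromorphic solution of {\bf IVP} on $B_{4 r}(0)$ agrees with $(c, s)$ on $B_r (0)$ by Theorem \ref{pair}, hence throughout the connected set obtained by deleting its poles from $B_{4 r}(0)$, by the Identity Theorem, so it is the solution just constructed.

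I expect the one point needing genuine care to be the bookkeeping of the first two paragraphs: one must verify that the three bad points sit in the annulus $B_{4 r}(0) \setminus B_{2 r}(0)$, so that the new definitions are consistent with the functions already in hand. Everything else — the differentiation, the identification via Theorem \ref{2 r}, and the local order count at the poles — is routine.
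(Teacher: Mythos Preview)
Your proof is correct and follows essentially the same route as the paper: define $(C,S)$ on $B_{4r}(0)$ minus the three bad points via the duplication formulae, verify {\bf IVP} by differentiation, identify with $(c,s)$ on $B_{2r}(0)$ via Theorem \ref{2 r}, and check that each excluded point is a simple pole because the common denominator has a simple zero there while the numerators are nonzero. You are in fact more careful than the paper on one point: you explicitly verify $2r \le K < 4r$, so that the three poles lie in the annulus $B_{4r}(0)\setminus B_{2r}(0)$ and the restriction to $B_{2r}(0)$ is genuinely holomorphic; the paper leaves this implicit.
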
 

\begin{proof} 
Our duplication formulae have defined $C$ and $S$ as holomorphic functions on the disc $B_{4 r} (0)$ less the points $\{ -K, \, - K \gamma, \, - K \overline{\gamma} \}$. As for Theorem \ref{2 r}, direct calculation shows that the pair $(C, S)$ satisfies {\bf IVP} and therefore agrees on $B_{2 r} (0)$ with $(c, s)$ itself, again by the Picard uniqueness theorem and the Identity Theorem; on these grounds we again drop the capitalization, writing $(C, S)$ simply as $(c, s)$. Let $2 z$ be one of the three excluded points and refer to the duplication formulae that define $s(2 z) = S(2 z)$ and $c(2 z) = C(2 z)$: the numerator of each is nonzero, while the denominator has a simple zero; consequently, the excluded point is a simple pole of $s$ and of $c$. This attends to the question of existence; by now, uniqueness needs no attention. 
\end{proof} 

\medbreak 

Once again, our very construction has produced duplication formulae for $s$ and $c$. Of course, the various symmetries established for $s$ and $c$ in the initial disc $B_r (0)$ continue to hold in the disc $B_{4 r} (0)$. 

\medbreak 

The following relation between $s$ and $c$ calls to mind a similar property of the trigonometric functions. 

\medbreak 

\begin{theorem} \label{K -}
If $z \in B_{\tfrac{1}{2} K} (\tfrac{1}{2} K)$ then $s(K - z) = c(z)$ and $c(K - z) = s(z)$. 
\end{theorem}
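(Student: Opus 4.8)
The plan is to run the uniqueness argument used repeatedly above, but with the base point shifted from $0$ to $\tfrac{1}{2} K$. Write $D = B_{\tfrac{1}{2} K}(\tfrac{1}{2} K)$ for convenience. First I would record the geometry of $D$: it is a disc, hence convex and connected, and the map $z \mapsto K - z$ is reflexion in its centre $\tfrac{1}{2} K$, so $z \in D$ implies $K - z \in D$. Moreover every point of $D$ has modulus strictly less than $K < 2 < 2^{4/3} = 4 r$, while the three poles $-K$, $-K \gamma$, $-K \overline{\gamma}$ of $c$ and $s$ all lie outside $D$ (they have nonpositive real part, whereas $D$ sits in the open right half-plane). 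Hence, by Theorem \ref{4 r}, $c$ and $s$ are holomorphic on $D$, and consequently so are the composite functions $C(z) := s(K - z)$ and $S(z) := c(K - z)$ on $D$.

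Next I would verify that $(C, S)$ satisfies the same first-order system as $(c, s)$. Differentiating through the reflexion gives $C\,'(z) = - s\,'(K - z) = - c(K - z)^2 = - S(z)^2$ and $S\,'(z) = - c\,'(K - z) = s(K - z)^2 = C(z)^2$. Then I would match the two pairs at the interior point $\tfrac{1}{2} K \in D$: from the values recorded after Theorem \ref{K}, namely $s(\tfrac{1}{2} K) = c(\tfrac{1}{2} K) = 2^{-1/3}$, one reads off $C(\tfrac{1}{2} K) = s(\tfrac{1}{2} K) = 2^{-1/3} = c(\tfrac{1}{2} K)$ and $S(\tfrac{1}{2} K) = c(\tfrac{1}{2} K) = 2^{-1/3} = s(\tfrac{1}{2} K)$.

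Thus $(C, S)$ and $(c, s)$ are two holomorphic solutions of the system $u\,' = - v^2$, $v\,' = u^2$ on the connected open set $D$ that agree at the point $\tfrac{1}{2} K$. The Picard uniqueness theorem, applied with base point $\tfrac{1}{2} K$ in place of $0$, forces $(C, S) = (c, s)$ on a neighbourhood of $\tfrac{1}{2} K$, and the Identity Theorem then propagates this agreement throughout $D$. Reading off the two coordinates yields $s(K - z) = c(z)$ and $c(K - z) = s(z)$ for every $z \in D$, as claimed.

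I expect the only step requiring genuine care to be the bookkeeping of the first paragraph — confirming that $D \subseteq B_{4 r}(0)$, that $D$ misses the three poles, and that $z \mapsto K - z$ carries $D$ into itself — since once $c$ and $s$ are known to be holomorphic on $D$, the differentiation identities and the appeal to uniqueness at the shifted base point are entirely routine.
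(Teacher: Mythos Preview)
Your argument is essentially the paper's own: define $C(z)=s(K-z)$, $S(z)=c(K-z)$ on the disc $D=B_{K/2}(K/2)$, check $C'=-S^2$, $S'=C^2$, match values at $\tfrac12 K$ using $s(\tfrac12K)=c(\tfrac12K)$, and conclude by Picard uniqueness together with the Identity Theorem. The paper does exactly this, only more tersely.

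One slip in your bookkeeping: the poles $-K\gamma$ and $-K\overline{\gamma}$ do \emph{not} have nonpositive real part. Since $\gamma=-\tfrac12+i\tfrac{\sqrt3}{2}$, one has $-K\gamma=\tfrac12K-i\tfrac{\sqrt3}{2}K$, whose real part is $\tfrac12K>0$. Your conclusion that these poles miss $D$ is nonetheless correct, because $|-K\gamma-\tfrac12K|=\tfrac{\sqrt3}{2}K>\tfrac12K$; you just need to replace the half-plane reasoning by this direct distance calculation. With that fix the proof is complete and matches the paper's.
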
 

\begin{proof} 
Observe that the symmetry $z \mapsto K -z$ leaves the disc $B_{K/2} (K/2) \subseteq \E$ invariant. Define holomorphic functions $C$ and $S$ in the disc $B_{K/2} (K/2)$ by the rules $C(z) = s(K - z)$ and $S(z) = c(K - z)$. By differentiation, $C\,' = - S^2$ and $S\,' = C^2$, so $(C, S)$ and $(c, s)$ satisfy the same system of first-order differential equations in the disc $B_{K/2} (K/2)$. By evaluation, $C(K/2) = s(K/2) = c(K/2)$ and $S(K/2) = c(K/2) = s(K/2)$, so $(C, S)$ and $(c, s)$ satisfy the same initial conditions at the centre of the disc. By the Picard uniqueness theorem and the Identity Theorem as usual, $C= c$ and $S = s$. 
\end{proof} 

\medbreak 

Of course, it follows that if $z \in B_{\tfrac{1}{2} K} ( - \tfrac{1}{2} K)$ then $s(K + z) = c(- z)$ and $c(K + z) = s(- z)$. 

\medbreak

We take this opportunity to record the values of $s$ and $c$ at certain (nonzero) cardinal points encountered thus far. As noted at Theorem \ref{K}, $s( - \tfrac{1}{2} K) = -1$ and $c( - \tfrac{1}{2} K) = 2^{1/3}$ while $s(\tfrac{1}{2}K) = c(\tfrac{1}{2}K) = 2^{-1/3}$. Theorem \ref{gamma} completes these values to  
$$s(- \tfrac{1}{2} K) = \overline{\gamma} \, s(- \tfrac{1}{2} K \gamma) = \gamma \, s(- \tfrac{1}{2} K \overline{\gamma}) = - 1 \; \; {\rm and} \; \;  c(- \tfrac{1}{2} K) = c(- \tfrac{1}{2} K \gamma) = c(- \tfrac{1}{2} K \overline{\gamma}) = 2^{1/3}$$
and  
$$s(\tfrac{1}{2} K) = \overline{\gamma} \, s(\tfrac{1}{2} K \gamma) = \gamma \, s(\tfrac{1}{2} K \overline{\gamma}) = 2^{-1/3}\; \; {\rm and} \; \;  c(\tfrac{1}{2} K) = c(\tfrac{1}{2} K \gamma) = c(\tfrac{1}{2} K \overline{\gamma}) = 2^{-1/3}.$$
Finally, from the duplication formulae (or otherwise) we deduce   
$$s(K) =   \overline{\gamma} s(K \gamma) =  \gamma s(K \overline{\gamma}) = 1$$
and 
$$c(K) = c(K \gamma) = c(K \overline{\gamma}) = 0$$
along with the fact that $s$ and $c$ have poles at the points $- K$, $- K \gamma$ and $- K \overline{\gamma}$. 

\medbreak 

Incidentally, a consideration of the function inverse to the strictly increasing function $s|_{(-K, K)}$ reveals that the positive number $K$ naturally associated to $s$ and $c$ is given by  
$$K = \int_0^1 \frac{{\rm d} \sigma}{(1 - \sigma^3)^{2/3}} = 1.76663875...\, .$$

\bigbreak 

If we wish to continue the extension of $s$ and $c$ by reduplication, it would be appropriate to cut back a little and start afresh from the natural open disc $B_K (0)$: this is the largest disc about $0$ on which the functions $s$ and $c$ are holomorphic. 

\medbreak 

Rather than continue $s$ and $c$ by reduplication, we instead apply the Schwarz Symmetry Principle to complete their construction. We focus our attention on the regular hexagon ${\rm \bf H}$ having $\pm K, \; \pm K \gamma$ and $\pm K \overline{\gamma}$ as pairs of opposite vertices, the vertices in counterclockwise order being therefore 
$$K, \: - K \overline{\gamma}, \: K \gamma, \: - K, \: K \overline{\gamma}, - K \gamma.$$
Notice that the Schwarz Symmetry Principle recovers $s$ on this hexagon from $s$ on just the triangle $\bf \Delta$ with vertices $0, \: K $ and $\: - K \overline{\gamma}$ (for example). 

\medbreak 

The following result serves to initiate Schwarz reflexions of $s$ beyond the hexagon. 

\medbreak 

\begin{theorem} \label{hex} 
The values of $s$ along the edge of the hexagon {\rm \bf H} joining $K$ to $- K \overline{\gamma}$ are real.   
\end{theorem}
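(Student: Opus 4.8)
The plan is to derive an explicit, manifestly real formula for $s$ along this edge. Since $-K\overline{\gamma} = K\exp(i\pi/3) = K(1+\gamma)$, the edge of the hexagon ${\rm \bf H}$ joining $K$ to $-K\overline{\gamma}$ is exactly the segment $\{\, K + t\gamma : 0 \le t \le K \,\}$, and I would parametrize it this way. The target is the identity $s(K+t\gamma) = 1/c(t)$ for $0 \le t < K$, the pole of $s$ at $-K\overline{\gamma}$ being recovered at $t = K$; reality of $s$ along the edge is then immediate.

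The first step is to extend the relation recorded just after Theorem~\ref{K -}, namely $s(K+z) = c(-z)$, from the disc $B_{K/2}(-K/2)$ on which it was obtained to a region reaching the segment $\{\, t\gamma : 0 \le t \le K \,\}$. Both $z \mapsto s(K+z)$ and $z \mapsto c(-z)$ are meromorphic on the convex, hence connected, set $\Omega := B_{4r}(0) \cap B_{4r}(-K)$, and they agree on the open subset $B_{K/2}(-K/2) \subseteq \Omega$, so by the Identity Theorem they agree throughout $\Omega$; and the estimate $|t\gamma + K|^2 = K^2 - Kt + t^2 \le K^2 < (4r)^2$ for $0 \le t \le K$ places the segment inside $\Omega$. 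Hence $s(K+t\gamma) = c(-t\gamma)$ for $0 \le t \le K$. Next, Theorem~\ref{gamma} and Theorem~\ref{neg} give $c(-\gamma t) = c(-t) = 1/c(t)$, so $s(K+t\gamma) = 1/c(t)$. Finally, for real $t \in [0,K)$ the number $c(t)$ is real (Theorem~\ref{real}) and non-zero, since $c(t) = 0$ would give $s(t)^3 = 1$, hence $s(t) = 1$, a value $s$ does not attain on the interval $(-K,K)$ on which it increases strictly. Therefore $s(K+t\gamma) = 1/c(t) \in \R$, proving the theorem. (As $c$ decreases from $1$ to $0$ on $[0,K)$, this also shows that $s$ maps the edge bijectively onto the interval $[1,+\infty]$ of the real axis, exactly the configuration required to continue $s$ by Schwarz reflection in the real axis.)

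The one step carrying real content, and so the main obstacle, is the extension of $s(K+z) = c(-z)$ off $B_{K/2}(-K/2)$: as literally stated, Theorem~\ref{K -} yields $s(K+t\gamma) = 1/c(t)$ only on the sub-segment $\{\, K + t\gamma : 0 < t < K/2 \,\}$ of the edge, one half of it. Making the continuation airtight requires a connected region on which both sides are meromorphic and which contains both $B_{K/2}(-K/2)$ and the full segment $\{\, t\gamma : 0 \le t \le K \,\}$; the lens $\Omega$ supplies this. One should also note that the only singularity encountered is the common pole of the two sides at $z = K\gamma$, that is, at the far endpoint $-K\overline{\gamma}$ of the edge, where the identity becomes an equality of poles. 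Everything else is routine manipulation of symmetries already in hand.
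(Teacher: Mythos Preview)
Your proof is correct and follows essentially the same route as the paper: combine the relation $s(K+z)=c(-z)$ from Theorem~\ref{K -} with the symmetries $c(\gamma z)=c(z)$ and $c(-z)=1/c(z)$, and invoke the Identity Theorem to pass from a sub-segment to the whole edge. The only difference is one of presentation: the paper checks reality on the portion of the edge lying in $B_{K/2}(K/2)$ and then extends reality by the Identity Theorem, whereas you first extend the functional identity $s(K+z)=c(-z)$ to the lens $\Omega=B_{4r}(0)\cap B_{4r}(-K)$ and then read off the explicit formula $s(K+t\gamma)=1/c(t)$ along the entire edge---a slightly more constructive packaging of the same argument, with the pleasant by-product that the image of the edge is identified as $[1,+\infty]$.
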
 

\begin{proof} 
On account of the Identity Theorem, we need only check reality along the segment $(K, \tfrac{1}{2} K (1 - \overline{\gamma}))$  in which the interval $(K, - K \overline{\gamma})$ meets the disc $B_{K/2} (K / 2)$. Let $z$ be a point in this segment: by Theorem \ref{K -} and Theorem \ref{neg} we deduce that 
$$s(z) = c(K - z) = 1/c(z - K);$$
as $z - K \in \gamma \: \R$, the remarks after Theorem \ref{gamma} allow us to conclude that $s(z)$ is real. 
\end{proof} 

\medbreak 

Theorem \ref{real} implies that $s$ is real on the conjugate edge $(K, - K \gamma)$ of the hexagon; Theorem \ref{gamma} now implies that $s$ takes values in $\gamma \: \R$ along the edges that emanate from $K \gamma$ and in $\overline{\gamma} \: \R$ along the edges that emanate from $K \overline{\gamma}$. 

\medbreak 

Now, as $s$ is real-valued along the edge $(K, - K \overline{\gamma})$ of the hexagon {\rm \bf H}, the Schwarz Symmetry Principle analytically continues $s$ from the triangle $\bf \Delta$ across this edge to the triangle $\bf\Delta^*$ with vertices $K, \: - K \overline{\gamma}$ and $K (1 - \overline{\gamma})$: explicitly, if $z^* \in \bf\Delta^*$ is the reflexion across $(K, - K \overline{\gamma})$ of $z \in \Delta$ (so that $z^* = K (1 - \overline{\gamma}) + \overline{\gamma} \, \overline{z}$) then $s(z^*) = \overline{s(z)}$; thus, $s(0) = 0$ reflects to produce $s(K (1 - \overline{\gamma})) = 0$ while $s(- \tfrac{1}{2} K \overline{\gamma}) = - \overline{\gamma}$ yields $s(- K \overline{\gamma} + \tfrac{1}{2} K) = - \gamma$ and $s(\tfrac{1}{2} K) = 2^{-1/3}$ yields $s(K - \tfrac{1}{2} K \overline{\gamma}) = 2^{-1/3}$. The Schwarz Symmetry Principle likewise analytically continues $s$ across the remaining five edges of {\rm \bf H}, so that $s$ is extended holomorphically to the interior of a hexagram. For illustration, we offer just one further example of this: the function $s/ \overline{\gamma}$ is real-valued along the (open) edge of {\rm \bf H} that joins $- K$ to $K \overline{\gamma}$; the Schwarz Symmetry Principle therefore analytically continues $s$ across this edge so that $s(0) = 0$ reflects to $s(K(\overline{\gamma} - 1)) = 0$, $s(\tfrac{1}{2} K \overline{\gamma}) = \overline{\gamma} \: 2^{-1/3}$ to $s(K \overline{\gamma} - \tfrac{1}{2} K) = \overline{\gamma} \: 2^{-1/3}$ and $s(- \tfrac{1}{2} K) = -1$ to $s(- K + \tfrac{1}{2} K \overline{\gamma}) = - \gamma$. By the Identity Theorem, these continuations agree with $s$ as already defined at points of $B_{4 r} (0)$ outside {\rm \bf H}. 

\medbreak 

We leave to the reader the pleasure of verifying that repeated applications of the Schwarz Symmetry Principle extend $s$ holomorphically to the parallelogram {\rm \bf P} with vertices $0, \, - 3 K, \, 3 K \gamma$ and $3 K \overline{\gamma}$ except for poles at $- K$ (the first that we found), $2 K \gamma, \, 2 K \overline{\gamma}, \, K\gamma - 2K$ and $K \overline{\gamma} - 2K$; apart from poles, the values of $s$ lie in $\gamma \: \R$ along $[0, 3 K \gamma] \cup [-3 K, 3 K \overline{\gamma}]$ and in $\overline{\gamma} \: \R$ along $[0, 3 K \overline{\gamma}] \cup [-3 K, 3 K \gamma]$. We also leave as an exercise the verification that the pattern so formed repeats over the entire complex plane, revealing the fully extended $s$ as an elliptic function with {\rm \bf P} as a fundamental parallelogram. In particular, note that $s$ has as periods $3 K, \, 3 K \gamma$ and $3 K \overline{\gamma}$. Note also that the order of $s$ as an elliptic function is three: if {\rm \bf P} is translated slightly in the positive real direction, the shifted parallelogram includes only the simple poles at $- K, \, 2 K\gamma$ and $2 K \overline{\gamma}$; it may be checked that the residues of $s$ at these poles are $-1, \, - \overline{\gamma}$ and $- \gamma$ respectively (with sum zero, as it should be). Alternatively, the shifted parallelogram includes simple zeros at $0, \, - K + K \gamma$ and $- K + K \overline{\gamma}$.  

\medbreak 

The holomorphic function $c : B_K (0) \to \C$ also extends fully to an elliptic function,  for which we continue the notation $c$. Perhaps the swiftest justification of this claim takes its cue from Theorem \ref{K -} in conjunction with the Identity Theorem and simply defines $c(z) : = s(K - z)$ whenever $z \in \C$ does not lie in the pole-set of $s$ (which the symmetry $z \mapsto K - z$ leaves invariant). Instead, $c$ also may be extended `kaleidoscopically' via the Schwarz Symmetry Principle, starting from the following counterpart to Theorem \ref{hex}. 

\medbreak 

\begin{theorem} \label{hexc}
The values of $c$ along the edge of {\rm \bf H} joining $K$ to $- K \overline{\gamma}$  lie in $\gamma \: \R$. 
\end{theorem}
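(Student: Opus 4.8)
The plan is to mirror the proof of Theorem \ref{hex}, interchanging the roles of $s$ and $c$ by means of the reflection identity of Theorem \ref{K -}. By the Identity Theorem it suffices to verify the claim along the open sub-segment in which the interval $(K, - K \overline{\gamma})$ meets the disc $B_{K/2}(K/2)$ --- that is, along $(K, \tfrac{1}{2} K (1 - \overline{\gamma}))$, exactly as in Theorem \ref{hex}.

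First I would record that this edge points in the direction $\gamma\,\R$: since $1 + \gamma + \overline{\gamma} = 0$, we have $- K \overline{\gamma} - K = - K(1 + \overline{\gamma}) = K \gamma$. Consequently, if $z$ lies on the sub-segment then $z - K \in \gamma\,\R$, hence $K - z \in \gamma\,\R$ as well; moreover $|K - z| < \tfrac{1}{2} K < K < 4r$, so $K - z$ lies safely in a disc on which $s$ is holomorphic and on which the symmetry of Theorem \ref{gamma} and its consequences remain in force (by Theorem \ref{4 r}).

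Next, since such a point $z$ lies in $B_{K/2}(K/2)$, Theorem \ref{K -} gives $c(z) = s(K - z)$. As $K - z \in \gamma\,\R$, the remarks following Theorem \ref{gamma} --- that $s$ carries $\gamma\,\R$ into $\gamma\,\R$ --- yield $s(K - z) \in \gamma\,\R$, and therefore $c(z) \in \gamma\,\R$. A final appeal to the Identity Theorem propagates this along the whole edge of the hexagon.

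I anticipate no genuine obstacle here; the only point demanding a little care is the cube-root-of-unity bookkeeping --- confirming that the edge direction is $\gamma\,\R$ rather than $\overline{\gamma}\,\R$, and that $K - z$ remains within the region where Theorem \ref{K -} and the remarks after Theorem \ref{gamma} apply. As an independent check one could instead combine Theorem \ref{hex} with the identity $c^3 + s^3 = 1$: on this edge $s$ is real, so $c^3 = 1 - s^3 \in \R$, whence $c \in \R \cup \gamma\,\R \cup \overline{\gamma}\,\R$; continuity together with $c(K) = 0$ then selects the branch $\gamma\,\R$. That route is a touch less transparent, so I would present the argument via Theorem \ref{K -} as the main line.
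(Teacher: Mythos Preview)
Your proof is correct and essentially the same as the paper's: both use Theorem \ref{K -} to write $c(z)=s(K-z)$ on the sub-segment inside $B_{K/2}(K/2)$ and then exploit the rotational symmetry of Theorem \ref{gamma} together with the observation that $K-z\in\gamma\,\R$. The only cosmetic difference is that the paper packages the last step as the conjugation identity $\overline{c(z)}=\gamma\,c(z)$ (obtained from $K-\overline{z}=\gamma(K-z)$ via Theorems \ref{real} and \ref{gamma}), whereas you invoke directly the remark that $s$ carries $\gamma\,\R$ into $\gamma\,\R$; these are the same ingredients in slightly different order.
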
 

\begin{proof} 
Let $z$ lie on the indicated edge: thus, $z = K (1 - \overline{\gamma}) + \overline{\gamma} \, \overline{z}$ and so $K - \overline{z} = \gamma (K - z)$. Now 
$$\overline{c(z)} = \overline{s(K - z)} = s(K - \overline{z}) = s(\gamma( K - z)) = \gamma s(K - z) = \gamma c(z)$$
by Theorem \ref{real} and Theorem \ref{gamma} along with Theorem \ref{K -} (taking $z$ inside $B_{K/2} (K/2)$ as we did for Theorem \ref{hex}). The condition $\overline{c(z)} = \gamma c(z)$ places $c(z)$ on the line $\gamma \: \R$. 
\end{proof} 

\medbreak 

Theorem \ref{real} and Theorem \ref{gamma} now imply that the values of $c$ on the edges of {\rm \bf H} lie alternately in $\gamma \: \R$ and in $\overline{\gamma} \: \R$.

\medbreak 

Naturally, the Identity Theorem ensures that the elliptic functions $s$ and $c$ continue to satisfy {\bf IVP} and the relation $c^3 + s^3 = 1$; also the various symmetries that were established for the ancestral holomorphic $s$ and $c$ in Theorem \ref{real}, Theorem \ref{neg} and Theorem \ref{gamma}; also the `trigonometric' identity of Theorem \ref{K -}. Further identities follow from these in combination: for example, 
$$c(2 K + z) = s(2 K - z) = 1/s(z)\; \; {\rm and} \; \; s(2 K + z) = c(2 K - z) = - c(z)/s(z);$$
in addition, we may recover the fact that $3 K, \, 3 K \gamma$ and $3 K \overline{\gamma}$ are periods of $c$ and $s$.   

\medbreak 

\begin{theorem} \label{T}
The function $s$ maps the edges of the triangle with vertices $K, \, K \gamma$ and $K \overline{\gamma}$ to the unit circle. 
\end{theorem}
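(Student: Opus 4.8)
\emph{Proposal.} The plan has three parts: reduce the three edges to one by the three-fold symmetry of $s$; show that the image of that one edge lies on the unit circle by recognising the edge as a line fixed by an anti-holomorphic symmetry of $s$; and then recover surjectivity onto the whole circle from connectedness together with three marked image points.

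For the reduction, recall that Theorem~\ref{gamma} holds for the elliptic $s$: since $z \mapsto \gamma z$ carries the edge $[K\gamma, K\overline{\gamma}]$ onto the edge $[K\overline{\gamma}, K]$ and $z \mapsto \overline{\gamma} z$ carries it onto $[K, K\gamma]$, while $s(\gamma z) = \gamma\, s(z)$ and $s(\overline{\gamma} z) = \overline{\gamma}\, s(z)$ preserve moduli, it suffices to treat the single edge joining $K\gamma$ to $K\overline{\gamma}$. Now $K\gamma$ and $K\overline{\gamma}$ are complex conjugates sharing real part $-\tfrac{1}{2}K$, so this edge is the vertical segment of points $z = -\tfrac{1}{2}K + it$, for which $\overline{z} = -K - z$. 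Hence, for $z$ on this edge, Theorem~\ref{real} (for the elliptic $s$), the $3K$-periodicity of $s$, and the identity $s(2K - z) = 1/s(z)$ recorded after Theorem~\ref{hexc} give
$$\overline{s(z)} = s(\overline{z}) = s(-K - z) = s(2K - z) = \frac{1}{s(z)} ,$$
so that $s(z)\,\overline{s(z)} = 1$, which places $s(z)$ on the unit circle. Transporting by $\gamma$ and $\overline{\gamma}$ then puts the image of all three edges on the unit circle.

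For surjectivity I would note that the image under the continuous map $s$ of the edge $[K\gamma, K\overline{\gamma}]$ is a connected (hence arc-like) subset of the unit circle; it contains the endpoint values $s(K\gamma) = \gamma$ and $s(K\overline{\gamma}) = \overline{\gamma}$ together with the midpoint value $s(-\tfrac{1}{2}K) = -1$ (Theorem~\ref{K}), hence contains the whole closed arc running from $\gamma$ to $\overline{\gamma}$ through $-1$. The images of the other two edges are the $\gamma$- and $\overline{\gamma}$-multiples of this image, so they contain the corresponding rotates of that arc; the three rotates together exhaust the unit circle. Therefore the edges are mapped onto the unit circle.

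The proof has essentially no computational content: everything turns on the single observation that the edge $[K\gamma, K\overline{\gamma}]$ is a line of anti-holomorphic symmetry for $s$, since $\overline{z} = -K - z$ there, after which the displayed identities are all on record. The one point that does need care — and which I would flag as the potential obstacle — is that the chain of equalities above is to be read for the fully extended meromorphic $s$, so one must confirm that no pole of $s$ sits on these edges. This is automatic: if some $z_0$ on the edge were a pole of $s$, then $\overline{z_0} = -K - z_0$ would be a pole too (the pole set is conjugation-invariant, by Theorem~\ref{real}), yet $s(-K - z_0) = 1/s(z_0) = 0$ would force $\overline{z_0}$ to be a zero of $s$ — a contradiction. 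So $s$ is holomorphic and nowhere zero on the open edges, and the argument is valid throughout.
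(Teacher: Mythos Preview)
Your core argument is correct and is precisely the paper's route: reduce to the edge $[K\gamma,K\overline{\gamma}]$ via Theorem~\ref{gamma}, observe that points on this edge satisfy $\overline{z}=-K-z$, and deduce $\overline{s(z)}=1/s(z)$. The only cosmetic difference is that the paper reaches $s(-K-z)=1/s(z)$ by applying Theorem~\ref{neg} and Theorem~\ref{K -} directly, whereas you pass through the $3K$-period and the recorded identity $s(2K-z)=1/s(z)$; since that identity was itself obtained from Theorems~\ref{neg} and~\ref{K -}, the two chains are equivalent. Your pole-avoidance paragraph is a nice explicit check that the paper leaves tacit.

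One point to flag: the paper's statement and proof claim only that the image lies \emph{in} the unit circle; surjectivity is neither asserted nor proved there, so your second paragraph goes beyond what the theorem requires. Moreover, the surjectivity step as you wrote it has a genuine gap: a connected subset of the circle containing $\gamma$, $-1$, and $\overline{\gamma}$ need not contain the short arc from $\gamma$ through $-1$ to $\overline{\gamma}$, since the long arc from $-1$ counterclockwise through $\overline{\gamma}$, $1$, and on to $\gamma$ also contains all three points. If you do want surjectivity, the clean fix is to add that the image of $[K\gamma,K\overline{\gamma}]$ is conjugation-symmetric (by Theorem~\ref{real} and the fact that the edge is its own conjugate); a conjugation-symmetric proper closed arc containing $-1$ must be centred at $-1$, and such an arc containing $\gamma$ automatically contains the whole arc $[\gamma,\overline{\gamma}]$ through $-1$. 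Then your rotation argument finishes the job.
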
 

\begin{proof} 

Let $z$ lie on the edge that joins $K \gamma$ to $K \overline{\gamma}$: thus $\overline{z} = - K - z$ and so  
$$\overline{s(z)} = s(\overline{z}) = s(- K - z) = - s(K + z) / c(K + z)$$
by Theorem \ref{real} and Theorem \ref{neg}; here 
$$s(K + z) = c(-z) = 1/c(z) \; \; {\rm and} \; \; c(K + z) = s(-z) = - s(z)/c(z)$$
by Theorem \ref{K -} and a further application of Theorem \ref{neg}, whence $\overline{s(z)} = 1/s(z)$ and therefore $|s(z)| = 1$. This proves that $s$ maps the edge $[K \gamma, K \overline{\gamma}]$ to the unit circle; the symmetry of $s$ expressed in Theorem \ref{gamma} concludes the proof. 
\end{proof} 

\medbreak 

We remark that a proof is possible earlier than this, by rotating about $0$ the middle third (say) of the edge $[K, K \gamma]$ so as to place it along the line ${\rm Re} = \tfrac{1}{2} K$ and inside the disc $B_{K/2}(K/2)$; but the present proof is cleaner.  

\medbreak 

Incidentally, the function $c$ clearly maps the entire imaginary axis to the unit circle: if $\overline{z} = -z$ then $\overline{c(z)} = c(\overline{z}) = c(-z) = 1/c(z)$ by Theorem \ref{real} and Theorem \ref{neg}. 

\medbreak 

The elliptic functions $s$ and $c$ have many other properties. Perhaps the most important of these are their addition formulae, which take many equivalent forms. One form is the pair 

$$c(a + z) = \frac{c(a) c(z)^2 - s(a)^2 s(z)}{s(a) c(a) s(z)^2 + c(z)}$$ 

$$s(a + z) = \frac{s(a) + c(a)^2 s(z) c(z)}{s(a) c(a) s(z)^2 + c(z)}$$

\medbreak 
\noindent 
which may be verified (not discovered!) by fixing $a$ but varying $z$ and checking that the functions of $z$ on the two sides satisfy the same differential equations $C\,' = - S^2, \; S\,' = C^2$  and the same initial conditions $C(0) = c(a), \: S(0) = s(a)$. 

\medbreak 

Specialization of these addition formulae naturally recovers the duplication formulae that played a significant part in our construction of $c$ and $s$. Further application then leads to the triplication formulae 

$$s(3 z) = s(z) c(z) \frac{2 + c(z)^6 - s(z)^3 c(z)^3 + s(z)^6}{c(z)^3 - s(z)^6 + 3 s(z)^3 c(z)^3 + s(z)^3 c(z)^6}\, ,$$

$$c(3 z) = \frac{c(z)^6 - s(z)^3 - 3 s(z)^3 c(z)^3 - s(z)^6 c(z)^3}{c(z)^3 - s(z)^6 + 3 s(z)^3 c(z)^3 + s(z)^3 c(z)^6}\, .$$

\medbreak 
\noindent 
Here, the formula for $c(3 z)$ corrects apparent sign errors in formula (44) of [2]. It is possible to extend $c$ and $s$ beyond $B_r (0)$ by triplication rather than by duplication (and the poles are secured by just one triplication); the details are interesting but challenging. 

\medbreak 

Finally, it is appropriate to mention the Weierstrass function $\wp$ that is associated to the period lattice of $s$ and $c$. In [1] the expression for the relevant $\wp$ is attributed to Dumont: explicitly, $\wp$ is given by 
$$3 \: \wp = \frac{s}{1 - c}$$
\medbreak 
\noindent 
with $3 K, \, 3 K \gamma$ and $3 K \overline{\gamma}$ as periods. It may be checked by differentiation that 
$$3 \: \wp\,' = \frac{c + 1}{c - 1}$$
and therefore that 
$$27 \: ((\wp\,')^2 - 4 \: \wp^3) = - 1$$
or  
$$( \, \wp\,')^2 = 4 \, \wp^3 -1/27.$$ 
\medbreak 
\noindent 
In the (shifted) period parallelogram {\rm \bf P}, $\wp$ has just one pole: a double pole at the origin, where the simple zero of $s$ is outmatched by the triple zero of $1 - c$; the simple zeros of $s$ at $- K + K \gamma$ and $- K + K \overline{\gamma}$ still survive as simple zeros of $\wp$. Thus, $\wp$ is indeed the Weierstrass function with {\rm \bf P} as period parallelogram. From the first-order equation that it satisfies, $\wp$ has invariants $g_2 = 0$ and $g_3 = 1/27$. The formulae for $\wp$ and $\wp\,'$ displayed above may be solved for $c$ and $s$: thus 
$$c = \frac{3\: \wp\,' + 1}{3\: \wp\,' - 1} \; \; {\rm and} \; \; s = \frac{6\: \wp}{1 - 3\: \wp\,'}\: .$$
\medbreak 
\noindent 
In the opposite direction, when the Weierstrass function $\wp$ with the indicated invariants is given, these formulae may be taken to define the Dixonian elliptic functions $c$ and $s$. 

\medbreak 

\bigbreak 

\begin{center} 
{\small R}{\footnotesize EFERENCES}
\end{center} 
\medbreak 

[1] E. Conrad and P. Flajolet, {\it The Fermat cubic, elliptic functions, continued fractions, and a combinatorial excursion}, S\'eminaire Lotharingien de Combinatoire 54 (2006) Article B54g.

\medbreak 

[2] A.C. Dixon, {\it On the doubly periodic functions arising out of the curve $x^3 + y^3 - 3 \alpha x y = 1$}, The Quarterly Journal of Pure and Applied Mathematics, 24 (1890) 167–233. 

\medbreak 

[3] E. Hille, {\it Ordinary Differential Equations in the Complex Domain}, Wiley-Interscience (1976); Dover Publications (1997).

\medbreak 

[4] J.C. Langer and D.A. Singer, {\it The Trefoil}, Milan Journal of Mathematics, 82 (2014) 161-182. 

\medbreak 

[5] E.H. Neville, {\it Jacobian Elliptic Functions}, Oxford University Press (1944). 

\medbreak

\end{document}